\newcommand{\rvline}{\hspace*{-\arraycolsep}\vline\hspace*{-\arraycolsep}}
\theoremstyle{definition}
\newtheorem{definition}{Definition}[section]
\newtheorem{theorem}{Theorem}[section]
\newtheorem{proposition}{Proposition}[section]
\newtheorem{example}{Example}[section]
\title{Learning bridge numbers of knots}
\author{Puttipong Pongtanapaisan, Hanh Vo, Thieu Nguyen} 
\date{February 2024}
\begin{document}

\maketitle
\begin{abstract}
This paper employs various computational techniques to determine the bridge numbers of both classical and virtual knots. For classical knots, there is no ambiguity of what the bridge number means. For virtual knots, there are multiple natural definitions of bridge number, and we demonstrate that the difference can be arbitrarily far apart. We then acquired two datasets, one for classical and one for virtual knots, each comprising over one million labeled data points. With the data, we conduct experiments to evaluate the effectiveness of common machine learning models in classifying knots based on their bridge numbers.
\end{abstract}

\section{Introduction}
Traditionally, knot theory is the study of embeddings of circles in 3-space. It can be used to model real-world entanglements such as vortex knots and linked vortex rings in water \cite{kleckner2013creation}. Kauffman then introduced a natural generalization of knot theory called virtual knot theory \cite{kauffman2021virtual}, which researchers have used to study knots in spaces more complicated than the 3-space, and knotted surfaces in 4-space \cite{kauffman2008invariants}. A function called \textit{invariant} can be defined on the set of (virtual) knots that measures how complicated each knot type is. A common way to define an invariant is to count a certain quantity obtainable from a presentation of a knot, and then minimize over all presentations of the knot type. An example of such an invariant is called the \textit{bridge number}.

In classical theory, there are various ways to define the same quantity called the bridge number of a knot. For instance, it can be defined as the minimum number of overbridges over all diagrams or the minimum number of local maxima over all reasonable embeddings. In contrast, these two quantities are no longer the same for virtual knots \cite{nakanishi2015two}. Thus, it makes sense to distinguish between the two versions of the bridge number as the \textit{first bridge number} and the \textit{second bridge number}. Examples where the second bridge number is one more than the first bridge number exist in the literature, but in this work, we also show that the difference between the two versions can be arbitrarily large.

Turning to computational aspects, the techniques developed in \cite{Blair_2020,nelson2006matrices,pongtanapaisan2019wirtinger} allow us to know the exact values of bridge numbers of over a million classical knots up to 16 crossings. 
For virtual knots, due to the lack of tabulated distinct virtual knots, we first modified existing Gauss codes representing classical knots to obtained over 100 millions virtual knots of 15 crossings. Although we do not prove rigorously that these modified codes represent distinct knots, we give some reasoning that most of them are.
Then we used the codes in \cite{Blair_2020, Paul} to compute their first bridge numbers, with an adaption in the part of computing Wirtinger number.
By doing so, we again were able to obtain the exact values of first bridge numbers of over a million virtual knots. 
For the second bridge number, we show that there is a lower bound coming from biquandles. We then used the code from \cite{nelson2006matrices} to compute the lower bound for the second bridge number for all reduced virtual knots up to 5 crossings.

Since one needs to consider infinitely many diagrams of a knot to compute the bridge number, there is no  algorithm so far that can compute the exact value of any version of the bridge number of a knot. 
Machine Learning has been successfully utilized to discover the relationships between knot invariants and predict their values (see \cite{davies2021advancing,hughes2020neural} and the references therein).  
For instance, the authors in \cite{hughes2020neural} used  
Neural Network in to investigate 
whether a knot can be realized as the closure of a quasipositive braid.

As mentioned above, since we have a sufficiently large labelled data set of knots with exact bridge numbers, we could use machine learning in the problem of classifying the bridge number. We experimented with classifying 3 and 4 bridge  classical knots of 16 crossings. Upon testing with several machine learning classifiers such as Neural network, KNN, SVN, Decision tree, we see that random forest is one of the promising methods. It consistently gives high performance, with accuracy $95.99\pm0.05\%$ and weighted F1-score $95.82\pm0.05\%$. As the bridge number computation is tied to the meridional rank conjecture, which is an unsolved problem 1.11 in \cite{kirby1997problems}, this might lead to some beneficial insights in future works.

\subsection*{Plan of the paper}
The paper is organized as follows. In Section \ref{sec:review}, we define mathematical terms relevant to the paper (e.g., the various formulations of the bridge number). We also discuss existing results and computational techniques in the literature \cite{blair2022coxeter,nelson2006matrices}. In Section \ref{section_biquandles}, we provide a lower bound of the second version of the bridge number of virtual knots using biquandles. We use Python code to compute the lower bound for several virtual knots and give a family where the two formulations of bridge numbers are arbitrarily far apart. In Section \ref{Sec:Experiments}, we compute the first version of bridge number for a large number of classical and virtual knots. We also experiment with machine learning models, and train the machine to predict unknown bridge numbers. In Section \ref{sec:Conclusions}, we conclude the paper and discuss future works.

\section{Preliminaries}\label{sec:review}
\subsection{Knots and their presentations}
A \textit{knot} is an embedding of a circle in 3-space $\mathbb{R}^3$. Two knots are considered equivalent if there is a homeomorphism of $\mathbb{R}^3$ to itself taking one knot to the other. Roughly speaking, this means one can be smoothly
deformed into the other. We also call an equivalence class of knots a \textit{knot type}.

In computational knot theory, it is customary to represent a knot by a sequence of integers.
\begin{definition}
    A \textit{Gauss code} is a sequence $\{a_i\}_{i=1}^{2n}$, where each term $a_i$ comes from the set $\{\pm 1,\pm 2,\cdots, \pm n\}$ appearing only once.
\end{definition}

 Now, we describe how to obtain a Gauss code from a knot diagram. A knot admits a diagram in the plane, where two small segments are removed at each double point to indicate whether the one strand goes over or under the other. Pick a point on the diagram to be the starting point, and label the $n$ crossings with numbers $1,2,\cdots, n$, and pick a direction. As one travels along the selected direction from the starting point, record the crossing labels in order. There is an additional rule: if we encounter a crossing $i$ as an undercrossing, we record the label $-i$. For instance, the knot diagram in Figure \ref{fig:gauss} has an associated Gauss code $\{-1,2,-3,1,-2,3\}$.
\begin{figure}[h]\label{fig:gauss}
   \centering
\includegraphics[width=4cm]{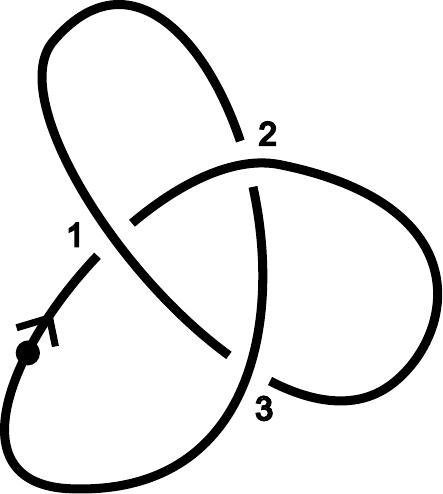}
\caption{A diagram with Gauss code $\{-1,2,-3,1,-2,3\}$.}
\end{figure}

Of course, multiple Gauss codes can correspond to the same knot type. Furthermore, not every Gauss code corresponds to a knot. This motivates Kauffman to enlarge the knot universe to include \textit{virtual knots} \cite{kauffman2021virtual}. This is analogous to including $\sqrt{-1}$ so that any polynomial equation has a complex solution. For instance, consider the Gauss code $\{ 1,-2,-1,2\}$ (recall that since we are representing a circle, we read the code circularly and 2 must connect to 1). If one tries to construct a knot diagram corresponding to this code, it will become apparent that the diagram cannot be closed up (see Figure \ref{fig:nonplanar}).

\begin{figure}[h]\label{fig:nonplanar}
   \centering
\includegraphics[width=9cm]{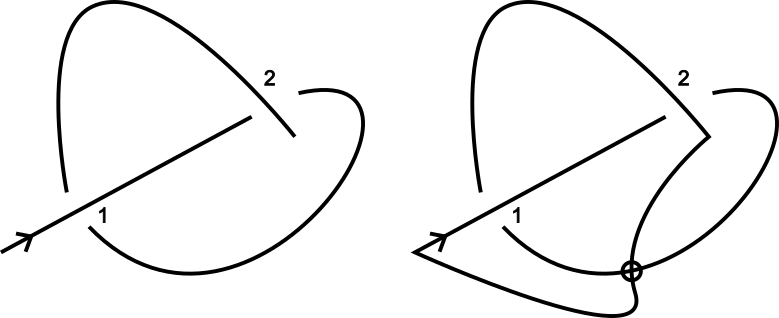}
\caption{A nonplanar Gauss code}
\end{figure}

Formally, virtual knots are in one-to-one correspondence with oriented Gauss codes up to Reidemeister moves, where the signs of the crossings are also taken into consideration. For more details, the readers are encouraged to consult \cite{kauffman2021virtual}. For our purpose of studying the bridge number, however, it suffices to think of virtual knots as Gauss codes defined as above and ignore the crossing signs. We define what a virtual knot is here anyway for completeness. 

\begin{definition}
    An \textit{oriented Gauss code} is a decorated Gauss code, where the entries $i$ and $-i$ may be decorated with a bar symbol.
\end{definition}

Whether a number in a Gauss code has a bar symbol depends on the two possible orientations near a crossing (see Figure \ref{fig:biqrules}). For instance, Figure \ref{fig:nonplanar} has Gauss code $\{ 1,-2,-1,2\}$, but its oriented Gauss code is still $\{ 1,-2,-1,2\}$ (i.e., all crossings are of the type shown on the right of Figure \ref{fig:biqrules}). On the other hand, the oriented Gauss code for Figure \ref{fig:gauss} is $\{-\overline{1},\overline{2},-\overline{3},\overline{1},-\overline{2},\overline{3}\}$.
\begin{definition}\label{def:orientedgauss}
    Let $S_i$ denote sub-sequences contained in a Gauss code, and $a,b,c$ denote positive numbers. A \textit{virtual knot} is an equivalence class of decorated Gauss codes up to cyclic permutation and the following moves:
\begin{enumerate}
    \item $S_1 \leftrightarrow a,-a,S_1.$ 
    \item $S_1S_2\leftrightarrow a,\overline{b},S_1,-a,-\overline{b},S_2$.
    \item $a,b,S_1,-a,c,S_2,-b,-c,S_3 \leftrightarrow b,a,S_1,c,-a,S_2,-c,-b,S_3$
\end{enumerate}.

\end{definition}

Roughly, moves $i$ for $i=1,2,3$ in Definition \ref{def:orientedgauss}, correspond to moves in the knot diagrams that involve $i$ crossings. Move 1 is actually depicted in Figure \ref{fig:gaussr1}.
Of course, within each of the moves above there are a few versions. For instance, a variation of the first move that is also allowed is $S_1 \leftrightarrow -a,a,S_1.$ The readers can work out all the versions as an exercise or consult the existing literature. 

The point is that if these moves are performed on ``realizable" Gauss codes (i.e., the ones corresponding to classical knots), then we are describing knots in 3-space up to the geometric equivalence relation described in the beginning. However, if we consider all possible Gauss codes, then we get virtual knot theory. Therefore, one can technically study virtual knot theory without drawing any pictures at all, but that would not be very fun.
\subsection{The bridge number: a geometric complexity}
Since any (virtual) knot can be presented as a Gauss code, we can use some features of the codes to measure the complexity of a knot type. The bridge number is one of the most useful invariants in knot theory, and is defined as follows. We begin by defining the bridge number of a Gauss code. We call a consecutive string of numbers starting an ending at negative integers a \textit{strand}. A strand containing at least one positive integer is called an \textit{overbridge}. Here, we treat the code cyclically as the code represents a circle in 3-space. For instance, consider the Gauss code $\{1,-4,-3,2,4,-1-2,3\}$. Then, there are two overbridges [2,4], and [3,1]. Here in cyclic ordering, the number 3 wraps around so that it is right next to the number 1. Thus, the bridge number of this Gauss code is 2.  

\begin{definition}
 The minimum number of overbridges over all Gauss codes representing the same knot type is called the \textit{bridge number} of a knot type $K$. We denote this by $b_1(K).$
\end{definition}
As mentioned before, each Gauss code has a knot diagram realization. This means that the definitions such as strands, overbridges, and bridge numbers can also be rephrased in terms of pictures and diagrams. For instance, a strand is an arc in the diagram from an undercrossing to another undercrossing.

We remark that it can be difficult to compute the exact bridge number since there are infinitely many Gauss codes representing the same knot type, and we have to search for the minimum. 

\begin{example}
    The Gauss code in Figure \ref{fig:gaussr1} is longer in length, but it represents the same knot type as the Gauss code from Figure \ref{fig:gauss}. We can see that the corresponding knots are equivalent by a slight perturbation. Another thing to notice is that $\{-1,2,-4,4,-3,1,-2,3\}$ has 4 overbridges, while the diagram in Figure \ref{fig:gauss} has 3 overbridges.  
    Say this diagram of interest represents the knot type $K$. One can prove that the actual bridge number of $K$ is 2. That is, there is a different Gauss code for $K$ with only 2 overbridges.
\end{example}

\begin{figure}[h]\label{fig:gaussr1}
   \centering
\includegraphics[width=4cm]{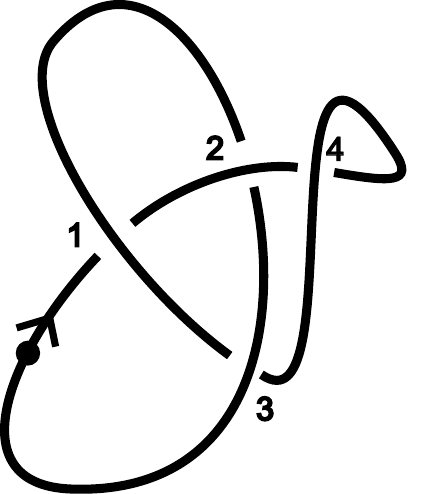}
\caption{A diagram with Gauss code $\{-1,2,-4,4,-3,1,-2,3\}$. Note that move 1 from Definition \ref{def:orientedgauss} was performed.}
\end{figure}

There is another way to define the bridge number for classical knots. Since a knot diagram is a drawing of a circle, we can perturb it so that the diagram is nice with respect to the standard height function of the plane $h:\mathbb{R}^2\rightarrow\mathbb{R}^2$ in the sense that the number of local maxima is the same as the number of local minima. For classical knots, it turns out that the minimum number of local maxima over all diagrams for a classical knot type $b_2(K)$ is exactly the same as $b_1(K)$ (See discussions in \cite{nakanishi2015two}). Now, if $K$ is a virtual knot type, then examples were found (See \cite{hirasawa2011bridge}, for instance) demonstrating that there is a virtual knot type $K$, where $b_1(K)=1$ and $b_2(K)=2.$

The authors in \cite{nakanishi2015two} referred to  $b_1(K)$ as the \textit{first bridge index} and $b_2(K)$ as the \textit{second bridge index}. We will refer to these quantities as the \textit{first bridge number} and the \textit{second bridge number}, respectively. A goal of this paper is to demonstrate that $b_1$ and $b_2$ can be very different. To show this, we employed the technique due to the first author, where $b_1$ was defined in an algorithmically friendly way.

Let $D$ be a diagram where $k$ strands of $D$ are colored. Consider a collection of crossings $\{c_1,...,c_m\}$, where each crossing $c_i$ has the property that an overstrand is colored, and the incoming strand is colored. Then, a coloring move on $\{c_1,...,c_m\}$ is an assignment of $m$ more colorings to $D$, resulting in a new diagram $D'$, where the outgoing strands at $\{c_1,...,c_m\}$ receive colors.

This process is demonstrated in Figure \ref{fig:demonstrate}. In the figure, there are six stages. In stage one, we have a diagram $D_1$, where one strand receives a color. The initial colored strands are also called the \textit{seeds}. Two coloring moves are then applied to two crossings of $D_1$ yielding $D_2$. The crossings are marked by transparent grey circles. Two coloring moves then take $D_2$ to $D_3.$ To go from each $D_j$ to $D_{j+1}$ where $j=3,4,5,$ we only see one crossing that satisfies the condition of the coloring move. Thus, there is one coloring move for each $j$ until we arrive at the end $D_6,$ where every strand is colored.

Generalizing the work of \cite{Blair_2020}, the first author showed in \cite{pongtanapaisan2019wirtinger} that if one can color a diagram $D$ with $s$ initial seeds and extend the coloring using only the coloring moves to the entire diagram, then the bridge number $b_1(K)$ is at most $s.$ Therefore, while the diagram (which readers can convert to Gauss code) in Figure \ref{fig:demonstrate} may seem to contain many overbridges, it turns out that $b_1(K) = 1$.

\begin{figure}[h]\label{fig:demonstrate}
   \centering
\includegraphics[width=8cm]{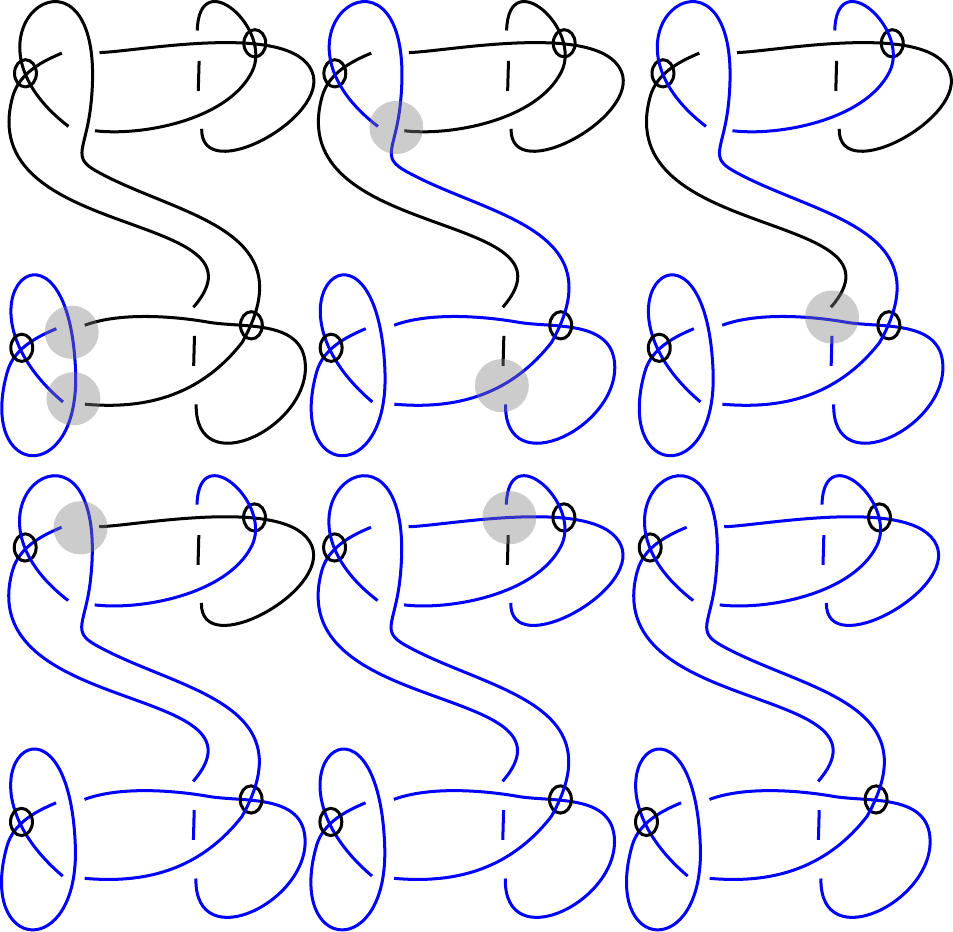}
\caption{Demonstrating the coloring method, showing that $b_1(K)=1.$}
\end{figure}

\subsection{The meridional rank: an algebraic complexity}
In the previous subsection, we defined a geometric measure that can be used to deduce how complex a knot is. In this subsection, we discuss a closely related algebraic invariant, which is conjectured to be the same as the bridge number. This way, our work can be thought of as predicting this algebraic quantity as well.

Consider the knot complement $X=\mathbb{R}^3\backslash K$. In other word, this is the space ``around" the knot. We can form a group whose elements are based loops in $X.$ A \textit{meridian} is an element in this group that is freely homotopic to a boundary of a disk embeded in 3-space intersecting the knot once. Roughly, a meridian can be continuously deformed to be a very simple loop. The \textit{meridional rank} is the smallest number of meridians needed to generate the group.

Even if the minimal size over all generating sets of a group is known, it is still an interesting question to ask for the minimum number of generating sets made up of ``standard" elements. For instance, the mapping class groups can always be generated with two elements if we do not restrict the types of elements \cite{wajnryb1996mapping}. However, the number of generators by standard Dehn twists can be arbitrarily large \cite{humphries2006generators}. This article investigates this problem for knot groups from a machine learning perspective. 

\section{Estimation from biquandles: difference between $b_1$ and $b_2.$}\label{section_biquandles}

In \cite{blair2022coxeter}, the authors were able to compute lower bounds of the bridge number by searching for maps from knot groups to Coxeter groups. Their method was very effective, but does not work for all knots. In this section, we review and generalize the lower bound coming from quandle theory.

A \textit{biquandle} $X$ is a set equipped with two binary operations $\overline{\triangleright}, \underline{\triangleright}$ satisfying the following properties.
\begin{enumerate}
    \item[(i)] For all $x\in X$, $x \overline{\triangleright} x=x\underline{\triangleright} x$,
    \item[(ii)]  The maps $\alpha_y, \beta_y : X \rightarrow X$ and $S : X \times X \rightarrow X \times X$ defined by $\alpha_y(x) = x \overline{\triangleright} y, \beta_y(x) = x \underline{\triangleright}  y$ and
$S(x, y) = (y \overline{\triangleright}x, x \underline{\triangleright} y)$ are invertible,
\item[(iii)] For all $x,y,z\in X$, $(x\overline{\triangleright} y)\overline{\triangleright} (z\overline{\triangleright}  y) = (x\overline{\triangleright} z)\overline{\triangleright}(y\underline{\triangleright} z)$ \\ $(x\underline{\triangleright} y)\overline{\triangleright} (z\underline{\triangleright}  y) = (x\overline{\triangleright}z)\underline{\triangleright} (y\overline{\triangleright} z)$ \\ $(x\underline{\triangleright}y)\underline{\triangleright}(z\underline{\triangleright} y) = (x\underline{\triangleright}z)\underline{\triangleright}(y\overline{\triangleright}  z)$,
\end{enumerate}

For practical computations, a biquandle can be encoded as a matrix \[
\begin{pmatrix}
 M_1
  & \rvline & M_2 \\
\hline
  M_3 & \rvline &
 M_4
\end{pmatrix}
\]

For practical purposes, only the upper right block and the lower right block are needed and a biquandle is sometimes represented as follows. 

\[
\begin{pmatrix}
 M_2
  & \rvline & M_4 
\end{pmatrix}
\]

For example, consider the following matrix representing a biquandle. The left block corresponds to $\overline{\triangleright}$ and the right block corresponds to $\underline{\triangleright}$. To see what $x\underline{\triangleright} y$ is in the matrix, we go to the $x$th row and the $y$th column of the right block. For instance, $2\underline{\triangleright} 1 = 3.$

\[
\begin{pmatrix}
1
  & 1 & 1 & \rvline & 1 & 1 & 1 \\

  3 & 2 & 2 & \rvline & 3 & 2 & 2

 \\

  2 & 3 & 3 & \rvline & 
2 & 3 & 3
\end{pmatrix}
\]

Biquandles have been used to distinguish knots before, but to the authors' knowledge, it provides a new lower bound for $b_2(K),$ where $K$ is a virtual knot. We demonstrate how this works now.

\begin{definition}
    A \textit{short arc} is a pair of consecutive numbers in the Gauss code. In a diagrammatic realization, a \textit{short arc} is a segment between two classical crossings.
\end{definition}

\begin{figure}[h]\label{fig:biqrules}
\labellist
\small\hair 2pt
\pinlabel $x$ at -1 106
\pinlabel $y\overline{\triangleright}x$ at 120 106
\pinlabel $x\underline{\triangleright}y$ at 120 17
\pinlabel $y$ at -2 16

\pinlabel $y$ at 207 106
\pinlabel $y\overline{\triangleright}x$ at 332 18
\pinlabel $x\underline{\triangleright}y$ at 332 109
\pinlabel $x$ at 207 16

\endlabellist
   \centering
\includegraphics[width=8cm]{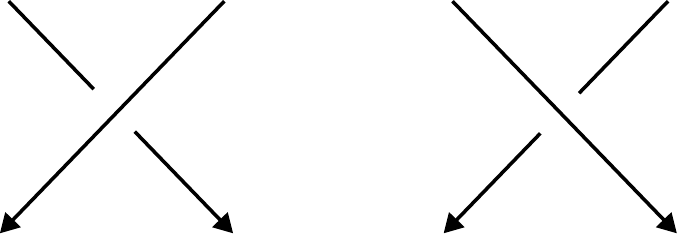}
\caption{For a coloring to be legitimate, these relations have to be satisfied at every crossing.}
\end{figure}

In Section \ref{sec:review}, we assign an entire strand a color. In this section, we will label the short arc of diagrams with elements from the biquandle. The number of legitimate labelings will be a lower bound of $b_2.$ In this paper, we use Sam Nelson's Python code, we can compute $col_{X}(K)$. 

\begin{definition}
    A labeling by a biquandle is \textit{legitimate} (sometimes also referred to as \textit{consistent}) if the relationship in Figure \ref{fig:biqrules} is satisfied at every crossing.
\end{definition}

In \cite{nelson2006matrices}, the authors enumerated biquandles up to order $n$. In that same paper (and in various other papers on biquandles), it is shown that the number of legitimate colorings is an invariant of virtual knots. That is, any allowable moves that do not change the knot type of a virtual knot will not alter the number of colorings.

It was shown before in \cite{clark2014quandle} that the number of quandle colorings $col_{X}(K)$ by a quandle $X$ of order $n$ estimates the bridge number. That is $n^{b_1(K)}\geq col_{X}(K)$, where $b_1(K)$ denotes the bridge number of the knot type. This argument can easily be generalized to biquandle, so that it lower bounds $b_2$.

\begin{proposition}
    Let $X$ be a biquandle of order $n$, then $n^{b_2(K)}\geq col_{X}(K)$
\end{proposition}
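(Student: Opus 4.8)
The plan is to adapt the quandle argument of \cite{clark2014quandle} to biquandles, with short arcs replacing over-arcs and with the local maxima of a Morse diagram playing the role that overbridges play in the classical case. Since the number of legitimate $X$-labelings is a virtual-knot invariant (proved in \cite{nelson2006matrices}), it is enough to exhibit \emph{one} diagram of $K$ whose number of legitimate $X$-labelings is at most $n^{b_2(K)}$. So I would first fix a diagram $D$ of $K$ in bridge position realizing $b:=b_2(K)$: the $b$ local maxima lie above a $2b$-strand tangle containing all of the classical and virtual crossings, which in turn lies above the $b$ local minima, so that $D$ is obtained by closing the tangle off with $b$ caps and $b$ cups. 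Such a $D$ with exactly $b_2(K)$ maxima exists by the usual procedure of pushing all maxima up and all minima down in a minimal Morse diagram; virtual crossings are no obstruction, since they can be slid freely by detour moves.

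The main step is a propagation argument. Each local maximum of $D$ lies in the interior of a single short arc (a cap is not recorded in the Gauss code, so the two legs of a cap together with the cap between them form one short arc, carrying one color); there are therefore at most $b$ such ``top'' short arcs, and likewise each local minimum lies in the interior of one short arc. I would show that a legitimate $X$-labeling is determined by its values on the top short arcs: their colors give the colors of all $2b$ strand-ends entering the tangle from above, and then, orienting $D$ by the height function and reading the tangle downward, at each classical crossing the colors of the two strands entering from above determine the colors of the two strands leaving below --- this is precisely the content of Figure~\ref{fig:biqrules} together with the invertibility of $\alpha_y$, $\beta_y$ and $S$ from axiom (ii) --- while a virtual crossing merely transposes the two colors. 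By induction on height every short arc of $D$ is forced, so the map sending a legitimate labeling to its tuple of colors on the top short arcs is injective, and $D$ has at most $n^{b}$ legitimate labelings. (Each of the $b$ cups then imposes one equation on this tuple, but only the upper bound is needed.) Combined with invariance of the coloring count, this gives $col_X(K)\le n^{b_2(K)}$.

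The step I expect to be the main obstacle is checking that this labeling scheme is genuinely legitimate and well posed: that the downward propagation across a classical crossing is single-valued regardless of which strand is the overstrand (handled by $\alpha_y,\beta_y,S$, but to be spelled out), that caps and cups are colored consistently --- this is where axioms (i) and (ii) really enter, and I would confirm it by verifying that the coloring count is unchanged by a cap--cup (Morse) cancellation and by the moves of Definition~\ref{def:orientedgauss} performed within bridge position --- and that pushing a virtual Morse diagram into bridge position really does not create extra maxima, so that the exponent is $b_2(K)$ and not something larger. I would also note explicitly why the bound involves $b_2$ rather than $b_1$: the short arcs of a diagram record the over/under information at \emph{both} strands at a classical crossing and are transparent only to virtual crossings, so the natural generating data are the maxima of a Morse diagram (counted by $b_2$) rather than the overbridges of a planar diagram (counted by $b_1$), and the two quantities part ways precisely in the virtual setting.
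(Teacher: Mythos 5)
Your proposal is correct and is essentially the argument the paper gives: place the diagram in bridge position with all $b_2(K)$ maxima above the crossings, propagate colors downward through the tangle using the biquandle rules (and their invertibility), so every legitimate labeling is determined by the colors at the maxima, giving $col_X(K)\le n^{b_2(K)}$. Your injectivity phrasing and the explicit treatment of virtual crossings and upward-oriented strands are just slightly more careful renderings of the same proof.
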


\begin{proof}
    Let $D$ be a diagram realizing the bridge number. This means $D$ has $b_2(K)$ local maxima with respect to the standard height function $h:\mathbb{R}^2\rightarrow \mathbb{R}$. We can further assume that all maxima are above all the crossings (see the top left of Figure \ref{fig:braid}). Any coloring on the strands containing these maxima extend to the whole diagram since removing the maxima from $D$ gives a collection of strings that only move downward and never backtrack. These strings can be further divided into blocks such that each block contains exactly one crossing (See the bottom of Figure \ref{fig:braid}). Now, it is quite clear that any labeling at the top extend to the bottom via the biquandle rules. More precisely, consider a block that has $k-2$ strands going straight down and exactly one crossing caused by the $(k-1)$-th and the $k$-th strand. Suppose that the biquandle labels at the top of the block are $a_1,...,a_{k}$. Then, the labels at the bottom of the block are $a_1,...,a_{k-1}\overline{\triangleright} a_k$ or $a_1,...,a_{k-1}\underline{\triangleright} a_k$.

    To form a knot, the bottom of the strings have to connect up, forming the local minima. These connections may or may not give consistent colorings of the whole diagram. If all the connections give consistent coloring, then $n^{b_2(K)}= col_{X}(K)$. However, in general, we just have an inequality $n^{b_2(K)}\geq col_{X}(K)$.
\end{proof}

\begin{figure}[h]\label{fig:braid}
   \centering
\includegraphics[width=8cm]{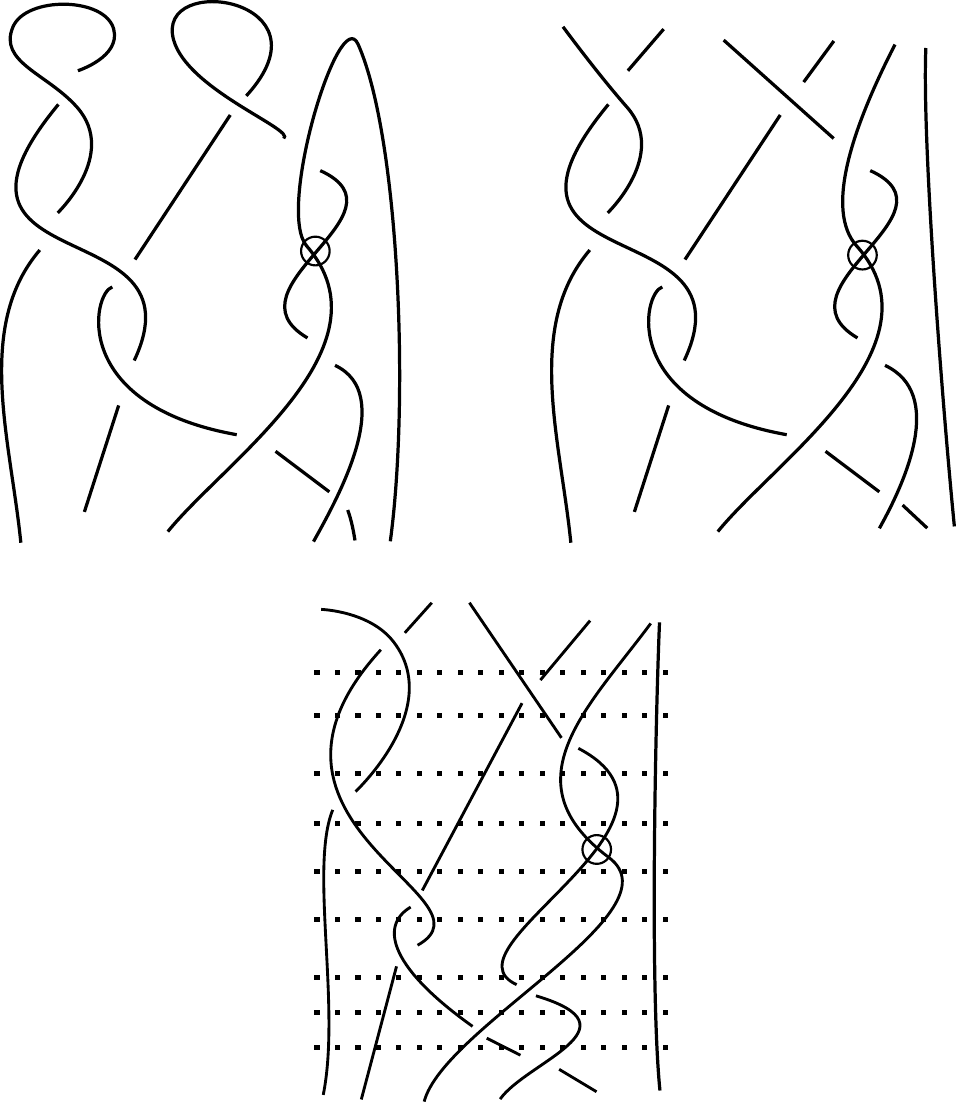}
\caption{(Top left) A position where all local maxima are above all the crossings}
\end{figure}
We now set up the notations for the next theorem. In Figure \ref{fig:difbridge} along the vertical direction, we see many copies tied together of a virtual knot that looks like a bow-tie with 2 virtual crossings and 4 real crossings. Each such bow-tie diagram is denoted as \textit{Kish} (these bow-tie shaped knots appeared in the literature as Kishino knots as they exhibit interesting behaviors). We also denote the result of summing $n$ copies of \textit{Kish} as $Kish_n.$

\begin{theorem}
    There exists a family of virtual knots $\{K_{m,n}\}$ such that $b_1(K_{m,n}) = m$ and $\lim_{n\rightarrow\infty} b_2(K_{m,n})-b_1(K_{m,n})=\infty.$
\end{theorem}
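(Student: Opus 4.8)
\emph{Proof strategy.} The plan is to realize the family as connected sums $K_{m,n}=J_m\,\#\,Kish_n$, where $J_m$ is the connected sum of $m-1$ trefoils and $Kish_n$ is the vertical tower of $n$ Kishino blocks depicted in Figure \ref{fig:difbridge}. Here $J_m$ is present only to prescribe the first bridge number: it is a classical knot, so $b_1(J_m)=m$, and its number of colorings by the dihedral quandle $R_3$ of order $3$ (equivalently, Fox $3$-colorings) is $col_{R_3}(J_m)=3^{m}$. The divergence $b_2-b_1\to\infty$ will be produced entirely by the Kishino tower; the known instance $b_1=1$, $b_2=2$ of \cite{hirasawa2011bridge} is the case $m=1$, $n=1$.

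First I would establish $b_1(K_{m,n})=m$. The upper bound is a diagram-level splicing argument, valid for virtual knots: joining a $b_1(A)$-bridge presentation of $A$ and a $b_1(B)$-bridge presentation of $B$ along a short arc lying on one overbridge of each gives $b_1(A\#B)\le b_1(A)+b_1(B)-1$. Since the coloring method of \cite{pongtanapaisan2019wirtinger} (illustrated in Figure \ref{fig:demonstrate}) gives $b_1(Kish)=1$, and one seed pushed down the tower colors all of $Kish_n$, we get $b_1(Kish_n)=1$ and hence, iterating, $b_1(K_{m,n})\le b_1(J_m)=m$. For the matching lower bound I invoke the quandle estimate $3^{b_1(K)}\ge col_{R_3}(K)$ of \cite{clark2014quandle}. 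Applied to $Kish_n$ it already forces $col_{R_3}(Kish_n)\le 3$, so the only $R_3$-colorings of the tower are the three constant ones; by multiplicativity of quandle colorings under connected sum, $col_{R_3}(K_{m,n})=col_{R_3}(J_m)\cdot 3/3=3^{m}$, and the same estimate applied to $K_{m,n}$ yields $b_1(K_{m,n})\ge m$.

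Next I would force $b_2$ up using the Proposition. The crucial --- and genuinely computational --- input is a biquandle $X$ of some order $N$ with $col_X(Kish)>N$; although no quandle can see past the constant colorings of a Kishino block, such a biquandle does exist, and it is found by running Sam Nelson's enumeration and coloring code of \cite{nelson2006matrices} on the four-crossing Kishino diagram, which is well within the range the paper already handles. Encoding a legitimate $X$-coloring of the long Kishino block by its $N\times N$ transfer matrix $T$, one has $col_X(Kish_n)=\operatorname{tr}(T^{n})$ while $col_X(Kish)=\operatorname{tr}(T)>N$; since the spectral radius obeys $\rho(T)\ge|\operatorname{tr}(T)|/N>1$, the counts $\operatorname{tr}(T^{n})$ are unbounded, and choosing $X$ so that $T$ is primitive (a condition one checks for the explicit $X$) gives $\operatorname{tr}(T^{n})=\rho(T)^{n}(1+o(1))\to\infty$. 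Since the fixed summand $J_m$ only multiplies $col_X(Kish_n)$ by a positive factor bounded away from $0$, we get $col_X(K_{m,n})\to\infty$, and the Proposition gives
\[
b_2(K_{m,n})\;\ge\;\log_N col_X(K_{m,n})\;\longrightarrow\;\infty\qquad(n\to\infty),
\]
which together with $b_1(K_{m,n})=m$ proves $b_2(K_{m,n})-b_1(K_{m,n})\to\infty$ for each fixed $m$.

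The main obstacle is not the chain of inequalities but making the connected-sum bookkeeping honest in the virtual category, where ordinary connected sum is not well defined. I would fix once and for all the long (basepointed) representatives $Kish$, $Kish_n$, $K_{m,n}$ drawn in Figure \ref{fig:difbridge}, verify that concatenating one more Kishino block acts on coloring counts exactly by the matrix $T$ above (so that both the quandle multiplicativity used for $b_1$ and the transfer-matrix description used for $b_2$ are legitimate here), and confirm that $col_X(J_m)\ne 0$ and that $T$ is primitive for the chosen $X$ --- the last point being precisely what guarantees $\operatorname{tr}(T^n)\to\infty$ rather than merely along a subsequence. Exhibiting the explicit biquandle together with a nonconstant $X$-coloring of $Kish$ witnessing $col_X(Kish)>N$ is the one step that is a computation rather than an argument; once it is in place, the rest is automatic.
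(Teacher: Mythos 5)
Your proposal follows essentially the same route as the paper: the same family $K_{m,n}=Kish_n\#K_m$ (Kishino tower summed with a connected sum of trefoils), the same seed/quandle-coloring argument to pin down $b_1(K_{m,n})=m$, and the same use of the Proposition applied to biquandle colorings of the Kishino tower to force $b_2(K_{m,n})\rightarrow\infty$. The only differences are presentational: the paper supplies the computational witness you defer to (an explicit order-$4$ biquandle $Y$ together with its $16$ legitimate colorings of $Kish$) and replaces your transfer-matrix/primitivity argument with a direct induction showing the coloring count gains a factor of $4$ with each additional Kishino block.
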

\begin{figure}[h]\label{fig:difbridge}
   \centering
\includegraphics[width=8cm]{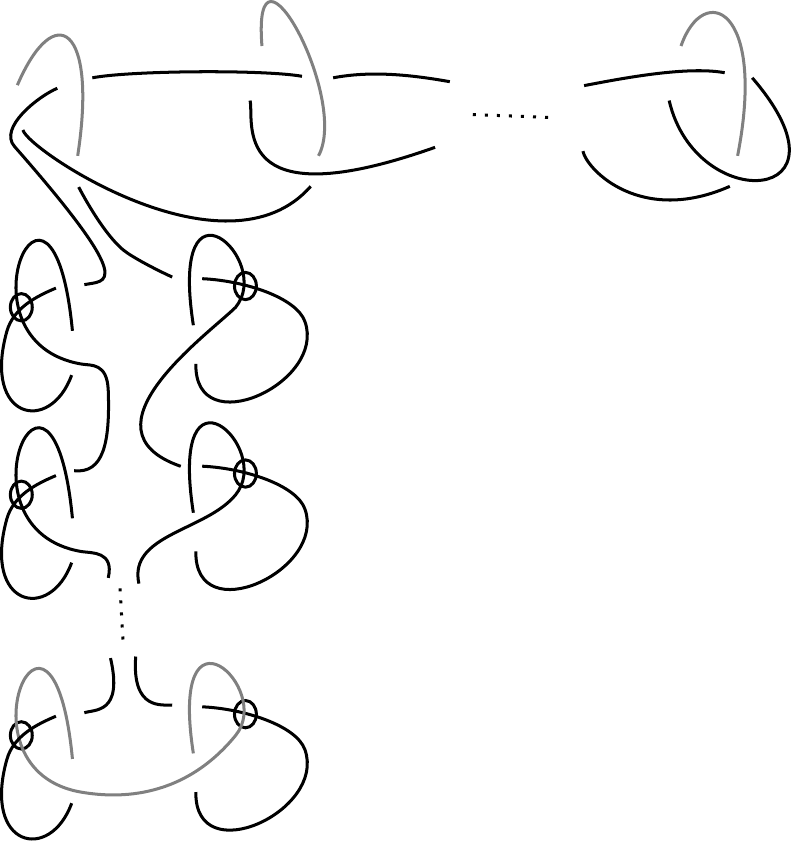}
\caption{The knot $K_{m,n}.$ Here, the vertical dots mean there are $n$ copies of the 4-crossing virtual knots tied together. The horizontal dots mean there are $m$ copies of the 3-crossing classical knots tied together.}
\end{figure}
\begin{proof}
    \textbf{Claim 1}: $b_1(K_{m,n}) = m$ 

   Let $X$ be a quandle that is not a biquandle. Pick any knot $K_m$ such that $Col_X(K_m) = n^{b_1(K_m)}$ and $b_1(K_m)=m.$ Many such knots exist, but for concreteness, we can let $K_m$ be the $(m-1)$-fold connected sum of the trefoil knot. Using the grey strands in Figure \ref{fig:difbridge} as seeds, we see that the knot $K_{m,n}$ has the property that $b_1(K_{m,n}) \leq m.$ To see that $b_1(K_{m,n}) \geq m,$ observe that we can label all arcs of the $n$-fold virtual knot sum (i.e. the vertical part of Figure \ref{fig:difbridge}) with one quandle element. That is, any coloring $f$ of $K_m$ extends to a uniquely determined coloring of $K_{m,n}=Kish_n\# K_m.$ 

        \textbf{Claim 2}: $\lim_{n\rightarrow\infty} b_2(K_{m,n})-b_1(K_{m,n})=\infty.$ 

        Let $Y$ be the biquandle 
        
\[
\begin{array}{cccc|cccc}
1 & 4 & 2 & 3 & 1 & 3 & 4 & 2 \\
2 & 3 & 1 & 4 & 3 & 1 & 2 & 4 \\
4 & 1 & 3 & 2 & 2 & 4 & 3 & 1 \\
3 & 2 & 4 & 1 & 4 & 2 & 1 & 3 \\ 
\hline
1 & 3 & 4 & 2 & 1 & 4 & 2 & 3 \\
3 & 1 & 2 & 4 & 2 & 3 & 1 & 4 \\
2 & 4 & 3 & 1 & 4 & 1 & 3 & 2 \\
4 & 2 & 1 & 3 & 3 & 2 & 4 & 1  
\end{array}.
\]

Claim 2 is verified once we show that $\lim_{n\rightarrow\infty} Col_Y(K_{m,n})=\infty.$ To see this, remove a subarc $a$ of the diagram of $Kish$. Denote the resulting diagram as $Kish\backslash a$. We take two copies of the resulting diagram. Consider the following 16 colorings of $Kish$, and hence of $Kish\backslash a$:
\begin{center}
    (1, 1, 1, 1, 1, 1, 1, 1),\\
 (1, 1, 1, 1, 4, 3, 4, 1),\\
 (1, 3, 4, 4, 1, 3, 2, 2),\\
 (1, 3, 4, 4, 4, 1, 3, 2),\\
 (2, 1, 3, 4, 1, 3, 2, 2),\\
 (2, 1, 3, 4, 4, 1, 3, 2),\\
 (2, 3, 2, 1, 1, 1, 1, 1),\\
 (2, 3, 2, 1, 4, 3, 4, 1),\\
 (3, 1, 2, 2, 2, 3, 1, 4),\\
 (3, 1, 2, 2, 3, 1, 4, 4),\\
 (3, 3, 3, 3, 2, 1, 2, 3),\\
 (3, 3, 3, 3, 3, 3, 3, 3),\\
 (4, 1, 4, 3, 2, 1, 2, 3),\\
 (4, 1, 4, 3, 3, 3, 3, 3),\\
 (4, 3, 1, 2, 2, 3, 1, 4),\\
 (4, 3, 1, 2, 3, 1, 4, 4).\\
\end{center}

Each list above is a labeling on 8 short arcs of $Kish$. For instance, (1,1,1,1,1,1,1,1) is the labeling where all short arcs receive the label 1. Now, observe there are 4 lists with the same number as the first entry. This implies the following. Suppose that the label of the strand containing $a$ before the removal is $x$. Then, in the second copy of $Kish\backslash a$, there are exactly 4 colorings where $Kish\backslash a$ also has the color $x.$

In other words, $Kish$ has 16 legitimate colors by the biquandle $Y$. As we join two copies of $Kish$ to form $Kish_2$, we can notice that for each color of the joining strand, there are 4 colorings for the second copy, giving 16$\times 4=64$ colors. Our result then follows by induction.
\end{proof}

\paragraph{Bounding version 2 bridge numbers for virtual knots:}
Recall that in contrast with classical knots, the two versions of bridge numbers for virtual knots are different.
By using biquandle
\[
(((1, 3, 4, 2), (3, 1, 2, 4), (2, 4, 3, 1), (4, 2, 1, 3)),
             ((1, 4, 2, 3), (2, 3, 1, 4), (4, 1, 3, 2), (3, 2, 4, 1))),
\]
we were able to compute the lower bound for version 2 bridge numbers of virtual knots up to 5 crossings.  
For computing version 1 bridge number, we present it separately in Section \ref{Sec:Experiments}.

\section{Implementation}\label{Sec:Experiments}
\subsection{Data set}\label{SectionInput}
In this section, we explain our implementation for obtaining the dataset of version 1 bridge numbers for classical knots with $16$ crossings and virtual knots with $15$ crossings. It is noteworthy that the method employed here is effective for knots of up to $26$ crossings, as we utilize the Wirtinger number algorithm \cite{Paul} to compute the upper bound of bridge numbers.
Our data and code will be made available at https://github.com/hanhv/Bridges. 
We begin with the following dataset, sourced from \cite{Code}:
\begin{equation}\label{original_data}
\texttt{all\_data\_A.xlsx}, \dots, \texttt{all\_data\_E.xlsx}.    
\end{equation} 

\subsubsection{Classical knots}
There are a total of 1,701,936 distinct classical knots up to 16 crossings, with over 81\% of them (1,388,705 knots) being 16-crossing. Their bridge numbers range between $2$ and $5$ (see \cite{Blair_2020}). 
According to the work of De Wit \cite{de20072} and Blair-Kjuchukova-Morrison \cite{blair2022coxeter}, all 2-bridge classical knots are known.
In the spreadsheets \eqref{original_data}, when the value in the column titled ``Wirtinger Number" is 2 or 3, it is the actual bridge number.
If the value is 4, it only serves as the upper bound.
In this case, if the value in the column titled ``Any Homomorphism? (1=Y,0=N)" is 1, then it represents the exact bridge number.

As explained in Section \ref{section_biquandles}, we can use the quandles method to search for better lower bound for bridge numbers of the knots in the unknown. 
Using the quandle $((1,3,2),(3,2,1),(2,1,3))$, we obtain 122 new knots with 4-bridges. 
Figure \ref{fig:bar_chart} illustrates the distribution of bridge numbers among 16-crossing classical knots. The predominant bridge numbers are 3 and 4: our labeled dataset comprises $892,850$ Gauss codes with bridge number of 3 and $189,688$ Gauss codes with that of bridge number 4.  

\begin{figure} 
    \centering
    \includegraphics[width=0.8\textwidth]{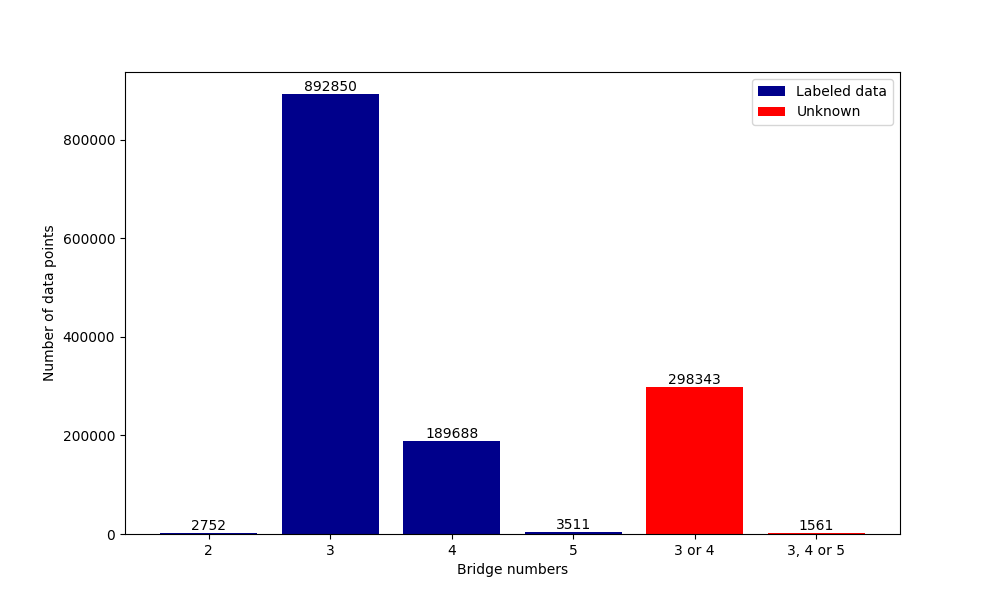}
    \caption{Bridge numbers of 16-crossing knots}
    \label{fig:bar_chart}
\end{figure}
  
\subsubsection{Virtual knots} 
Let $g$ be a Gauss code representing a 16-crossing classical knot.
For each $k\in{1, \dots, 16}$, by removing $k$ and $-k$ from $g$, we obtain a virtual knot with 15 crossings. This effectively replaces one real crossing with a virtual crossing.
By doing so, we obtain over 100 million, not necessarily distinct, virtual knots. We will give some intuition to why they should be mostly distinct in the next subsection.
The bridge numbers range from 1 to 5. 
Note that in contrast with classical knots, where the only 1-bridge classical knot is the unknot, there are infinitely many nontrivial 1-bridge virtual knots (see \cite{byberi2008virtual}). 
To compute the bridge numbers, we use \cite{Paul, Code}. There is only a small modification in the function $\texttt{calc\_wirt\_info}$ when computing the Wirtinger number:
\begin{verbatim}
def calc_wirt_info(knot_dict):
n = 1
# n = 1 instead of 2 as in the original file
# rest of the function implementation
\end{verbatim} 
Among all the obtained virtual knots, we were able to compute the exact bridge numbers for 1,308,309 Gauss codes, with the distribution as in Figure \ref{fig:bar_chart_virtual}. 
Interestingly, among them, only two Gauss codes have a bridge number of 1. 
Based on the acquired data, a natural question arises: does the method used to generate virtual knots induce an imbalance in the dataset, or is it indeed a theoretical phenomenon? 
\begin{figure} 
    \centering
    \includegraphics[width=0.8\textwidth]{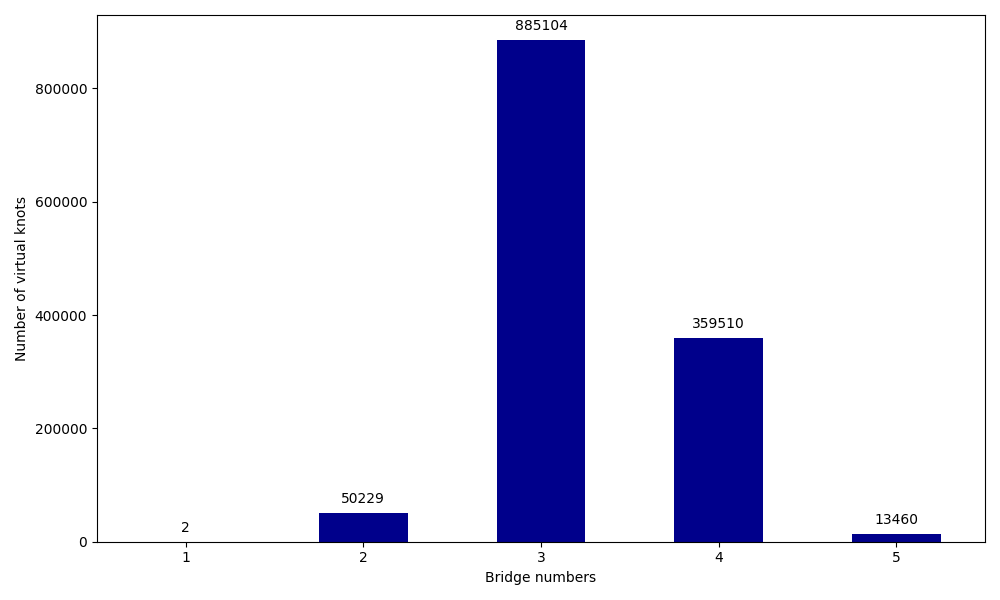}
    \caption{Labelled data for 15-crossing virtual knots}
    \label{fig:bar_chart_virtual}
\end{figure}
In case the exact bridge numbers are unknown, we use quandle $((1, 3, 2), (3, 2, 1), (2, 1, 3))$ to search for better lower bounds.  
\subsection{Behaviors of the Jones polynomial under local moves}

In this paper, we have generated new examples of virtual knots by removing a pair of numbers from Gauss codes for classical knots of 16 crossings. It is possible that doing such a process to two different Gauss codes yields equivalent virtual knots. Although we did not put in the work of verifying that all of our 15 crossing virtual knots are distinct, we can see why the resulting virtual knots are distinct most of the time by considering \cite{kamada2002virtualized}.

More specifically, suppose that $K$ is a 16-crossing classical knot. Let $K'$ be the result of replacing the number $\pm j$ with $\mp j$. Let $K_v$ be the result of removing a pair of numbers (say, 1,-1) from the Gauss code of $K$. Theorem 1 of \cite{kamada2002virtualized} states that the Jones polynomial $V_{K_v}(A)$ of $K_v$ is

\begin{align*}
    \frac{A^{\pm 3}V_K(A)+A^{\mp 3}V_{K'}(A)}{A^3+A^{-3}}
\end{align*}

This means that if we perform the removal of a pair of numbers to two inequivalent Gauss codes $K_1$ and $K_2$ such that $K_1$ and $K_2$ have distinct Jones polynomial, then the resulting 15-crossing virtual knots will have distinct Jones polynomial, and hence distinct virtual knots. The instances where two distinct classical knots share identical Jones polynomial can happen, but not as often. Knots with the same Jones polynomial are not classified. Examples were construct using a local move called \textit{mutation}. Of course, removing a pair of numbers from Gauss codes of two distinct classical knots with the same Jones polynomial may still result in distinct virtual knots, but other invariants are needed to distinguish them.

\subsection{Experiment with machine learning}

In this section, we assess the effectiveness of machine learning methods in classifying the bridge numbers of knots. For our experiment, we specifically examine the set of 16-crossing classical knots, with bridge numbers either 3 or 4. Similar experiments can be done with virtual knots.

Machine learning classifiers encompass various algorithms for pattern recognition and prediction. Traditional methods like Random Forest, Decision Trees, KNN, SVM, and Logistic Regression rely on statistical principles for structured datasets, offering interpretability and ease of implementation. In contrast, neural networks exhibit complexity and flexibility, excelling in handling unstructured and high-dimensional data, often achieving state-of-the-art performance. Hyperparameter tuning aims to enhance model performance through techniques such as grid search, random search, and gradient-based optimization.

To evaluate the performance of classification models, in this experiment, we consider the test accuracy and F1 score. Accuracy measures the proportion of correctly classified instances out of the total instances. However, accuracy can be misleading, especially when dealing with imbalanced datasets. F1 score, on the other hand, considers both precision and recall, providing a more balanced assessment of a model's performance, particularly in situations where class distribution is uneven or when false positives and false negatives carry different costs.

For model selection, we initially assessed our original labeled datasets using various classifiers. Note that hyperparameter tuning was not conducted at this stage. Instead, default hyperparameters were used for all classifiers, except for the Logistic Regression and Neural Network models as they did not converge with the default settings:
\[
\text{Logistic Regression}: \quad \texttt{max\_iter=10000}
\]
and 
\[
\text{Neural Network}: \quad \texttt{max\_iter=500, hidden\_layer\_sizes=(100, 100, 100))}.
\] 
The performance metrics (test accuracy and weighted average F1-score) for these models can be found in Table \ref{tab:model_performance}.
Based on the evaluation, the Random Forest model stands out as a promising choice due to its consistently high performance. 
For assessing the Random Forest model's robustness, we check the cross-validation scores, which are presented in Table \ref{tab:cross-validation}.
To address imbalanced datasets, methods such as undersampling and oversampling are employed, with common oversampling techniques including SMOTE and ADASYN. In our dataset, where cyclic rotations of a Gauss code signify the same knot, we can also utilize this approach for oversampling to maintain data integrity while achieving balance. Table \ref{tab:performanceRF} displays the performance of Random Forest with various sampling methods. 
Figures \ref{fig:confusion_matrixRF} and \ref{fig:roc_curveRF} depict the Confusion matrix and ROC curve for Random Forest using the original labeled dataset, split with 80\% training and 20\% testing data. Utilizing the Random Forest model with the original labeled dataset for further tuning and prediction is recommended.


\begin{table}[htbp]
\centering
\caption{Model performances with original labeled dataset: 80\% training, 20\% testing split}
\medskip
\renewcommand{\arraystretch}{1.5}
\label{tab:model_performance}
\begin{tabular}{@{}lllll@{}}
\toprule
\textbf{Model} & \textbf{Training accuracy} & \textbf{Test accuracy} & \textbf{F1-score} \\ \midrule
        Random Forest & 0.9999 & 0.9570 & 0.9554 \\
        Decision Tree & 1.0000 & 0.9419 & 0.9419 \\
        Extra Trees & 1.0000 & 0.9407 & 0.9385 \\
        Logistic Regression* & 0.8247 & 0.8252 & 0.7462 \\
        Stochastic Gradient Descent & 0.8247 & 0.8252 & 0.7462 \\
        Gradient Boosting & 0.8707 & 0.8707 & 0.8547 \\
        XGBoost & 0.9307 & 0.9273 & 0.9238 \\
        Gaussian Naive Bayes & 0.6504 & 0.6502 & 0.6867 \\
        Neural Network* & 0.9181 & 0.8998 & 0.8933 \\
        Support Vector Machine & 0.8701 & 0.8678 & 0.8542 \\
        K-Nearest Neighbors & 0.9284 & 0.8934 & 0.8870 \\
        \bottomrule
    \end{tabular}
\end{table}

\begin{table}[htbp]
     \centering
    \caption{Random Forest: cross-validation scores (10 folds)}
    \medskip
    \renewcommand{\arraystretch}{1.5}
    \label{tab:cross-validation}
    \begin{tabular}{|l|l|c|}
    \hline
    \textbf{Metric} & \textbf{Mean} & \textbf{Standard deviation} \\ \hline
    Accuracy & 0.9599 & 0.0005 \\\hline
    F1-score & 0.9582 & 0.0005 \\\hline
    ROC-AUC & 0.9889 & 0.0003 \\\hline 
    \end{tabular}
\end{table}

\begin{table}[htbp]
\caption{Random Forest: different sampling method for training set}
\medskip
\renewcommand{\arraystretch}{1.5}
\centering
\begin{tabular} {|l|l|l|l|}
\hline
\textbf{Sampling technique} & \textbf{Training accuracy} & \textbf{Test accuracy} & \textbf{F1-score} \\ \hline
Original Data               & 1.0                        & 0.9579    & 0.9564 \\ \hline
Undersample                 & 0.9484        & 0.9221     & 0.9259 \\ \hline
SMOTE                       & 1.0                        & 0.9553     & 0.9544 \\ \hline
Oversample  (cyclic rotations)                 & 1.0                        & 0.9551   & 0.9534\\ \hline
\end{tabular}
\label{tab:performanceRF}
\end{table}

\begin{figure} 
    \centering
    \includegraphics[width=0.7\textwidth]{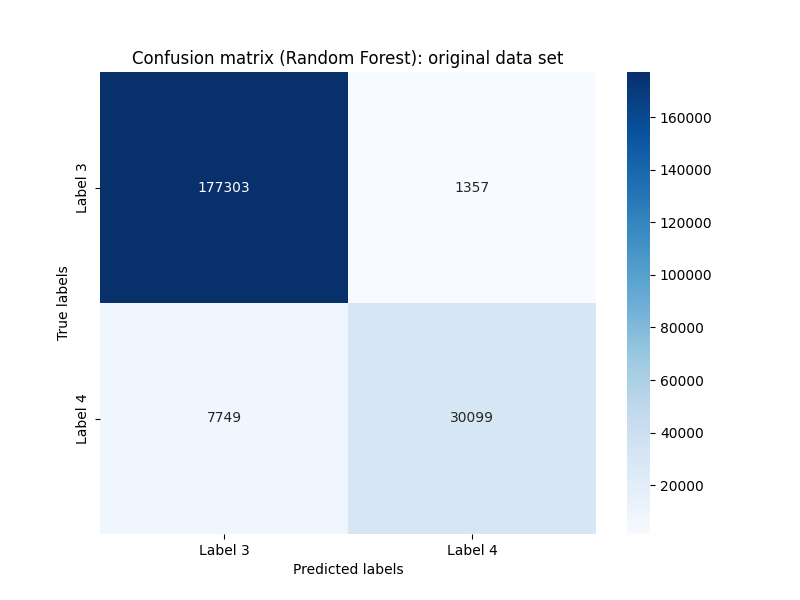}
    \caption{Confusion matrix for Random Forest using the original labeled dataset, split with 80\% training and 20\% testing data}
    \label{fig:confusion_matrixRF}
\end{figure}

\begin{figure} 
    \centering
    \includegraphics[width=0.7\textwidth]{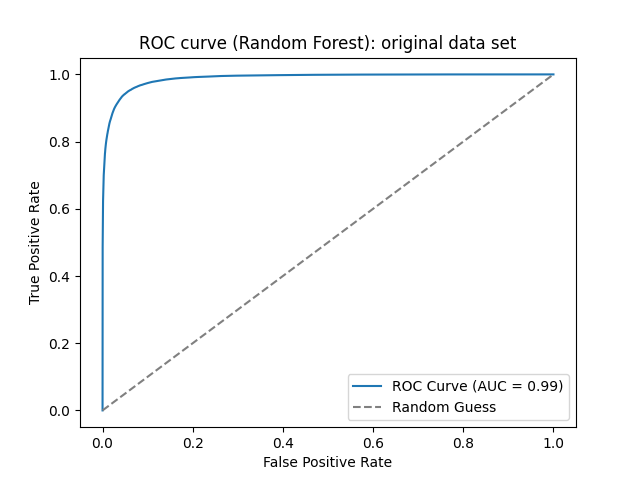}
    \caption{ROC curve for Random Forest using the original labeled dataset, split with 80\% training and 20\% testing data}
    \label{fig:roc_curveRF}
\end{figure}

 \section{Conclusions}\label{sec:Conclusions}
We have provided lower bounds for the second bridge number $b_2(K)$ of virtual knots by biquandles. There is also an upper bound for $b_2(K)$ in terms of the \textit{virtual braid index}, denoted by $braid(K)$. This is the minimum number $r$ needed to present $K$ as the closure of a $r$-strand virtual braid. One of the directions we can pursue is to find an efficient way to estimate the virtual braid index from a Gauss code.

 In conclusion, this study sheds light on the challenges associated with determining bridge numbers for both classical and virtual knots.   Moving forward, future research could focus on refining machine learning algorithms to better accommodate the intricacies of virtual knots and improve classification accuracy. Additionally, exploring alternative computational techniques and datasets could offer valuable insights into enhancing our understanding of knot theory and its applications in various fields.

\bibliographystyle{plain}
\bibliography{sample}
\end{document}